\def \Q {{\mathbb Q}}
\def \Z {{\mathbb Z}}
\def \C {{\mathbb C}}
\def \Ll {{\mathcal L}}
\def \J  {{\mathbf J}}
\newtheorem{theorem}{Theorem}[section]
\newtheorem{lemma}[theorem]{Lemma}
\newtheorem{obs}[theorem]{Observation}
\title{Few Non-derogatory Directed Graphs from Directed Cycles }
\author { A. Satyanarayana Reddy\\
Department of 
Mathematics, Shiv Nadar 
University, India-201314\\ (e-mail: 
 satyanarayana.reddy@snu.edu.in).
  }
\date{}
\begin{document}
\maketitle

\begin{abstract}
We constructed a few non-derogatory digraphs by adding arcs to a directed cycle and computed their
characteristic polynomials and exponents.
\end{abstract}
{\bf Key Words :} Non-derogatory digraphs, directed cycles, directed fans and directed
wheels.\\
 {\bf Mathematics Subject Classification(2010):} 05C50.

\section{Introduction and preliminaries}
A digraph (directed graph) $X= (V,E)$ consists of a finite set $V$, called the set of
vertices  and a
set $E$, called the set of arcs. If $(i,j)\in E$,
then $i$ and $j$ are adjacent and
$(i,j)$ is an arc starting at vertex $i$ and terminating at vertex $j$.
The adjacency matrix of a digraph $X$, denoted by $A(X)$ (or simply $A$), is the
matrix
 whose $ij^{th}$ entry $a_{i,j}$
is the number of arcs starting at $i$ and terminating at $j$.
In this work, except in few cases where
there are  $2$ or more self loops at a vertex,  we
consider  $a_{i,j}\in\{0,1\}$.  If $X^c$ is the
complement digraph of $X$,
then $A(X^c)=\J-I-A(X)$,
where $\J$ is the matrix with each entry being $1$ and $I$ is the identity
matrix.

 The characteristic polynomial of $X$ is denoted
by $\Psi_X(x)$ and it is defined as the characteristic polynomial of
the adjacency matrix $A$ of $X$, {\it{i.e.,}} $\Psi_X(x)=|xI-A|$. By
Cayley-Hamilton theorem  $\Psi_X(A)=0$. The monic polynomial $f(x)$
of least degree for which $f(A)=0$ is called the minimal polynomial
of $A$, denoted by $m_X(x)$. By definition and division algorithm in
$\C[x]$, $m_X(x)$ divides $f(x)$ for all $f(x)$ for which $f(A)=0$.
A digraph $X$ is called {\em non-derogatory} if its adjacency matrix $A$
is non-derogatory, {\it{i.e.,}} if $\Psi_X(x)=m_X(x)$; otherwise,
$X$ is called {\em derogatory}. Since $\Psi_X(x)$ and $m_X(x)$ have the
same roots, hence if all the eigenvalues of a digraph  are distinct,
then it is a non-derogatory digraph. We start with the following
theorem.

\begin{theorem}\cite{deng}
 If the adjacency matrix of a digraph $X$ of order $n$ contains a non-singular
lower
(upper) triangular sub matrix of order $n-1$, then $A(X)$ is non-derogatory.
\end{theorem}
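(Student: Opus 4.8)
The plan is to reduce non-derogatoriness to a statement about the matrix $xI-A$ and then read that statement off from the triangular block. Recall the standard fact that for an $n\times n$ matrix $A$ one has $m_X(x)=\Psi_X(x)/d_{n-1}(x)$, where $d_{n-1}(x)$ is the greatest common divisor of all $(n-1)\times(n-1)$ minors of $xI-A$; equivalently, $A(X)$ is non-derogatory precisely when $d_{n-1}(x)=1$, i.e. precisely when $\operatorname{rank}(\lambda I-A)\ge n-1$ for every $\lambda\in\C$. Hence it is enough to exhibit, among the $(n-1)\times(n-1)$ minors of $xI-A$, either a single one that is a nonzero constant, or a small family whose gcd is a nonzero constant: then $d_{n-1}(x)$ divides that constant and so equals $1$.

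First I would set up notation: let $S=A[R,C]$ be the hypothesized non-singular lower-triangular submatrix, where $R$ is obtained by deleting one row and $C$ by deleting one column, and fix orderings of the retained rows and of the retained columns that display $S$ as lower triangular with nonzero diagonal $s_1,\dots,s_{n-1}$. Let $T(x)$ be the submatrix of $xI-A$ occupying the same rows and columns; up to sign $T(x)$ is a cofactor of $xI-A$, hence one of the minors governing $d_{n-1}(x)$. The heart of the argument is to transfer the shape of $S$ to $T(x)$: in the chosen orderings the diagonal cells of $T(x)$ carry the entries $-s_1,\dots,-s_{n-1}$ with no surviving $x$, while the cells in which the $x$'s from $xI$ do survive lie strictly below the diagonal of $T(x)$ (a cell above the diagonal is, by lower-triangularity of $S$, a zero of $A$, and the indexing of the retained row and column there keeps it off the diagonal of $xI$ as well). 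Thus $T(x)$ is itself lower triangular with nonzero constant diagonal, so $\det T(x)=\pm s_1\cdots s_{n-1}\ne 0$ is a nonzero constant. This forces $d_{n-1}(x)=1$, hence $m_X(x)=\Psi_X(x)$ and $X$ is non-derogatory. The upper-triangular case is the same argument applied to $A^{T}$, which has the same characteristic and minimal polynomials as $A$.

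The step I expect to be the real obstacle is the claim above about $T(x)$ --- that after deleting the prescribed row and column the $x$'s from $xI$ all drop strictly below the diagonal of the retained block and never onto or above it. This is exactly the place where the precise meaning of the hypothesis is used: one must track how the position of the deleted row relates to that of the deleted column inside the triangular ordering, and check that no diagonal cell of the retained block is a self-loop of $A$ (if it were, an $x$ would survive on the diagonal of $T(x)$ and $\det T(x)$ would cease to be constant). If that verification only shows $T(x)$ to be lower Hessenberg rather than lower triangular, the fix is to adjoin a second minor --- for instance a suitable principal $(n-1)$-minor built from the triangular block --- and observe that it can vanish only at the finitely many roots of $\det T(x)$, at which the principal minor is nonzero, so that the gcd of the two is still a nonzero constant. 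I would organise the write-up in exactly that order: the clean triangular case, then the Hessenberg refinement, then the conclusion $d_{n-1}(x)=1$ and hence $m_X(x)=\Psi_X(x)$.
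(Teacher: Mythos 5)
Your reduction is the right framework: $A$ is non-derogatory iff the gcd $d_{n-1}(x)$ of the $(n-1)\times(n-1)$ minors of $xI-A$ equals $1$, so a single constant nonzero minor suffices. But the difficulty you flagged is fatal to the argument as written. The claim that the $x$'s of $xI$ fall strictly below the diagonal of $T(x)$ holds only when the deleted row $r$ and deleted column $c$ are the two extreme indices with matching orientation (delete row $n$, column $1$ with $S$ lower triangular, or row $1$, column $n$ with $S$ upper triangular), i.e.\ exactly when $A$ is a Hessenberg matrix with nonzero sub(super)diagonal; in that case your ``clean case'' is the standard proof (equivalently, $e_1$ is a cyclic vector). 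For any other position an $x$ survives on the diagonal of $T(x)$ at every index $i<\min(r,c)$ or $i>\max(r,c)$, and the diagonal cells of the block may be loops. Concretely, for the digraph on $\{1,2,3\}$ with loops at $2,3$ and the arc $3\to 2$, the triangular block sits in rows and columns $\{2,3\}$, $T(x)=\begin{pmatrix}x-1&0\\-1&x-1\end{pmatrix}$ has determinant $(x-1)^2$, and every principal $2$-minor of $xI-A$ also vanishes at $x=1$; so your proposed repair (adjoin a principal minor) does not give gcd $1$ --- one is forced to a non-principal minor, and you give no general recipe for choosing it.

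Worse, no repair can exist at the stated level of generality, because the statement as quoted (triangular block in an arbitrary position, either orientation) is false. Take $X=K_3$, the complete digraph on three vertices, so $A=\J-I$: deleting row $3$ and column $1$ leaves $\begin{pmatrix}1&1\\0&1\end{pmatrix}$, a non-singular upper triangular submatrix of order $n-1$, yet $\J-I$ is symmetric with eigenvalue $-1$ of multiplicity two, hence derogatory, its minimal polynomial being $(x-2)(x+1)$; for $n=2$ the matrix $I$ (a loop at each vertex) is an even simpler example. This also shows that your freedom to ``fix orderings of the retained rows and columns'' cannot be permitted, since it only enlarges the class of admissible blocks. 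The theorem is correct once the triangular block is pinned to the extreme off-diagonal position --- equivalently, $A$ is upper Hessenberg with nonzero subdiagonal or lower Hessenberg with nonzero superdiagonal --- and under that hypothesis your first paragraph already constitutes a complete proof. Note that the paper itself offers no proof (the result is quoted from \cite{deng}), so the positional hypothesis is precisely what you should make explicit, after which you can stop at the clean triangular case and drop the Hessenberg ``fix'' entirely.
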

Note that the following matrix has a non-singular upper triangular
matrix of order $n-1$. Hence corresponding digraphs are
non-derogatory.
$$ \begin{bmatrix}
    a_{1,1} & 1 &a_{1,3}  &\dots  & a_{1,n} \\
    a_{2,1} & 0 & 1 & \dots  & a_{2,n} \\
       \vdots &\vdots & \ddots & \ddots  & \vdots \\
     a_{n-1,1} & 0 & 0 & \dots  & 1 \\
a_{n,1} & a_{n,2} &a_{n,3} & \dots  & a_{n,n} \\
     \end{bmatrix}$$

Our objective is to show that  these digraphs have distinct
eigenvalues whenever $a_{n,1}=1$. As there are lots of digraphs
satisfying $a_{n,1}=1$, we  consider only few digraphs and  evaluate
their characteristic polynomial for each case. The following well
known theorem is used to get the coefficients of the characteristic
polynomial for each case. To state the theorem, recall that a {\em linear
directed graph}({\emph{ldg}}) is a digraph in which  indegree and
 outdegree of each vertex is equal to 1 {\it{i.e.,}} it consists of directed cycles.
Hence length
of ldg (number of arcs in the ldg) is equal to number of vertices in the ldg.

\begin{theorem}[\cite{C:D:S},Theorem 1.2] \label{thm:coe}Let
$\Psi_X(x)=x^n+a_1x^{n-1}+a_2x^{n-2}\dots +a_{n-1}x+a_n$ be the characteristic
polynomial of the digraph $X$.
 Then for each $i=1,2,\ldots,n$
\begin{eqnarray*}
a_i=\sum_{L\in \Ll_i}(-1)^{p(L)},\end{eqnarray*}
where $\Ll_i$ is the set of all linear directed subgraphs({\emph{ldsgs}}) of $X$
on
exactly $i$ vertices and
$p(L)$ denotes the number of components of $L$.
\end{theorem}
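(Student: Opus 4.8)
The plan is to derive the formula directly from the Leibniz (permutation) expansion of $\Psi_X(x)=\det(xI-A)$. First I would record the standard expansion of a characteristic polynomial in terms of principal minors: writing $A[S]$ for the principal submatrix of $A$ indexed by a subset $S\subseteq V$, picking the diagonal entry $x$ from $n-i$ coordinates in each Leibniz term (which forces those coordinates to be fixed points) and $-a_{v,\sigma(v)}$ from the remaining $i$ coordinates gives
\[
\Psi_X(x)=\sum_{i=0}^{n}(-1)^i\Big(\sum_{|S|=i}\det A[S]\Big)x^{n-i},
\qquad\text{so that}\qquad
a_i=(-1)^i\sum_{|S|=i}\det A[S].
\]
This reduces everything to interpreting a single principal minor $\det A[S]$ combinatorially.

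Next I would expand $\det A[S]=\sum_{\sigma\in\mathrm{Sym}(S)}\operatorname{sgn}(\sigma)\prod_{v\in S}a_{v,\sigma(v)}$. Since $a_{i,j}\in\{0,1\}$, a summand is nonzero precisely when every arc $(v,\sigma(v))$ is present in $X$; the set of these $i$ arcs is then a linear directed subgraph $L$ of $X$ with vertex set $S$, because every vertex of $S$ has in-degree and out-degree $1$ inside $L$. Conversely each ldsg on $S$ comes from a unique permutation $\sigma$, whose disjoint cycles are exactly the directed-cycle components of $L$. If $L$ has $p(L)$ components of lengths $\ell_1,\dots,\ell_{p(L)}$ with $\sum_j\ell_j=i$, then, since an $\ell$-cycle is a product of $\ell-1$ transpositions, $\operatorname{sgn}(\sigma)=\prod_{j=1}^{p(L)}(-1)^{\ell_j-1}=(-1)^{i-p(L)}$. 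Hence $\det A[S]=\sum_{L}(-1)^{i-p(L)}$, summed over all ldsgs $L$ with vertex set $S$.

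Combining the two steps, and noting that summing over all $S$ with $|S|=i$ and all ldsgs on $S$ is the same as summing over $\Ll_i$,
\[
a_i=(-1)^i\sum_{|S|=i}\ \sum_{L\text{ on }S}(-1)^{i-p(L)}
=\sum_{L\in\Ll_i}(-1)^{2i-p(L)}
=\sum_{L\in\Ll_i}(-1)^{p(L)},
\]
which is the claim. The only delicate points are the sign identity $\operatorname{sgn}(\sigma)=(-1)^{i-p(L)}$ and the bijection between nonzero permutation terms and linear directed subgraphs on $S$; I expect the sign bookkeeping to be the main thing to get exactly right. In the few cases where the paper permits multiple arcs or repeated self-loops, the product $\prod_v a_{v,\sigma(v)}$ contributes the corresponding multiplicity, so the statement should then be read with ldsgs counted with weight, but for entries in $\{0,1\}$ it is exactly as stated.
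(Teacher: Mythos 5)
Your proposal is correct: the principal-minor expansion of $\det(xI-A)$, the bijection between nonzero Leibniz terms and linear directed subgraphs on a vertex set $S$, and the sign count $\operatorname{sgn}(\sigma)=(-1)^{i-p(L)}$ together give exactly $a_i=\sum_{L\in\Ll_i}(-1)^{p(L)}$, and your caveat about counting ldsgs with multiplicity when parallel arcs or repeated loops occur is the right reading. The paper itself states this result without proof, citing Cvetkovi\'c--Doob--Sachs, and your argument is essentially the standard proof given there, so there is nothing to reconcile.
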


Note that in almost all the cases we choose
these digraphs in such a way that every ldsg of every length contains a common
vertex
so from Theorem~\ref{thm:coe}, $p(L)=1$ for all $L\in \Ll_{i}\;for\;1\leq i\leq
n$.
 Consequently, the characteristic polynomial of
these digraphs is of the form $\Psi_X(x)=x^n-a_1x^{n-1}-a_2x^{n-2}\dots
-a_{n-1}x-a_n$, where
 {\emph{$a_i$ is the number of  ldsgs $L$ of $X$ with exactly
$i$ vertices}}.
 Hence $a_i$ is also equal to number of ldsgs $L$ of $X$ of length $i$.
Further, we use two digraphs to explain Theorem~\ref{thm:coe}. The proof for the other digraphs is
similar in nature and hence is omitted.

A matrix $B$ is said to be {\emph {cogradient}} to a matrix $C$ if there exists a
permutation matrix $P$ such that $B=P^TCP$.
 A non-negative matrix (every entry is $\geq 0$) $A$ is called
{\emph{reducible}} if there exists square submatrices $Q$ and $S$  such that $A$ is cogradient to
matrix of the form
$$\left[\begin{array}{cc}
 Q & R \\
 O & S
 \end{array}\right],$$  else  $A$
is said to be  {\emph{irreducible}}.
 It is known that the adjacency matrix of a digraph is irreducible if and only
if its digraph is strongly connected.
A non-negative  matrix is said to be {\emph{primitive }} if $A^m$ is positive
for
some positive integer $m$ and the smallest positive  integer $k$ such that $A^k$ is positive is
called
the exponent of $A$, denoted  $exp(A)$.
It is clear that a primitive matrix is necessarily an irreducible matrix.
 A digraph is said to be {\em primitive} if its adjacency matrix is primitive and its
exponent is  same as that of its adjacency matrix.  For more information on
irreducible
matrices and primitive matrices, see~\cite{minc}. We also rely on the following
known result for finding the  exponents for few of these digraphs. Again the
proof for finding the exponents of all these digraphs is similar in nature and hence we
provide a proof for only one digraph.

\begin{theorem}[\cite{C:D:S},Theorem 1.6]\label{thm:walk}
Let $A$ be the adjacency matrix of the digraph $X$ with the vertex set
$\{1,2,\dots n\}$.
 If $a_{ij}^k$ denotes the $ij^{th}$ entry of the power matrix $A^k$, then $a_{ij}^k$
is the number of directed walks of length $k$
starting at vertex $i$ and terminating at vertex $j$.
\end{theorem}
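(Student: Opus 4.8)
The plan is to prove the statement by induction on the walk length $k$, using the recursion $A^{k+1}=A^{k}A$ together with the combinatorial reading of matrix multiplication as concatenation of walks. For the base case $k=1$, the entry $a_{ij}^{1}=a_{ij}$ is, by the very definition of the adjacency matrix recalled in the introduction, the number of arcs starting at $i$ and terminating at $j$, which is exactly the number of directed walks of length $1$ from $i$ to $j$ (for $i=j$ this counts the self loops at $i$, and in the cases with several parallel arcs or several self loops it counts them with multiplicity).

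For the inductive step I would assume that $a_{im}^{k}$ equals the number of directed walks of length $k$ from $i$ to $m$ for every ordered pair of vertices $i,m$. Reading off the $ij^{\mathrm{th}}$ entry of $A^{k+1}=A^{k}A$ gives
$$a_{ij}^{k+1}=\sum_{m=1}^{n}a_{im}^{k}\,a_{mj}.$$
The key observation is a bijection: every directed walk of length $k+1$ from $i$ to $j$ is obtained in exactly one way by traversing a directed walk of length $k$ from $i$ to some vertex $m$ and then a single arc from $m$ to $j$, and conversely each such pair determines such a walk. Hence, for a fixed penultimate vertex $m$, the number of length-$(k+1)$ walks from $i$ to $j$ whose penultimate vertex is $m$ is $a_{im}^{k}\,a_{mj}$ by the induction hypothesis and the definition of $a_{mj}$; summing over all $m$ yields $a_{ij}^{k+1}$, completing the induction.

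There is no genuine obstacle here; the only point deserving care is multiplicity. If walks are taken to be sequences of arcs (rather than sequences of vertices), then distinct parallel arcs — or distinct self loops at a vertex — give distinct walks, and the product $a_{im}^{k}\,a_{mj}$ counts precisely the ordered choices (length-$k$ walk to $m$, then arc $m\to j$). With this convention the argument applies verbatim to the few later digraphs that carry two or more self loops at a vertex, so no separate treatment of that case is needed.
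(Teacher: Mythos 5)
Your induction on $k$ via $a_{ij}^{k+1}=\sum_{m}a_{im}^{k}a_{mj}$, with the walk decomposed at its penultimate vertex, is correct and is the standard argument for this classical fact; the paper itself gives no proof, citing it directly from Cvetkovi\'c--Doob--Sachs. Your care about counting walks as arc sequences so that parallel arcs and multiple self loops are counted with multiplicity is exactly the right convention for the later digraphs in the paper with repeated self loops, so nothing is missing.
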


The digraphs which we are studying in this paper belongs to one of the following
classes.

{\begin{itemize}
\item Let $\mathcal{C}DC_n$ be a class of  digraphs (called {\em directed cycles with
directed chords}) of order $n\geq 3$ such that each digraph in it
contains  a directed cycle $DC_n$ with vertices labeled as
$1,2,\ldots,n$ with some additional  arcs among  non-consecutive vertices (we
call these arcs as directed chords).

 \item Let $\mathcal{C}DF_n$ be a class of  digraphs (called {\em directed fan graphs
with spokes (arcs)}) of
order $n\geq 3$, such that each digraph in it
contains  a directed path $DP_{n-1}$ (with
vertices labeled as $2,3,\ldots, n$) and an additional vertex $1$. Also for each $i$ either there is
a directed arc from $1$ to $i$ or from $i$ to $1$. Hence $\mathcal{C}DF_n$ is a class with $2^{n-1}$
digraphs of order $n$. Note that  if $X\in \mathcal{C}DF_n$ and $X$ contains an arc from $n$ to $1$
and
from $1$ to $2$, then $X\in \mathcal{C}DC_n$.

\item Similarly let $\mathcal{C}DW_n$
be a class of  digraphs (called {\em directed wheel graphs with  spokes}) on $n\geq 4$ vertices such
that
each digraph contains a directed cycle $DC_{n-1}$ (with
vertices labeled as $1,2,\ldots, n-1$) and an additional vertex $n$.
Also for each $i$ either there is a directed arc from $n$ to $i$ or from $i$ to $n$.
Observe  that there are  $2^{n-1}$ such digraphs. Again if $X\in \mathcal{C}DF_n$ and $X$ contains
an arc from  $n-1$ to $n$ and $n$ to $1$  then $X\in \mathcal{C}DC_n$.
\end{itemize}}

\section{Characteristic polynomials and Exponents}\label{sec:one}

\section*{\small{Digraphs from $\mathcal{C}DC_n$}}

We start this section with a well known example. The digraph $DC_n\in
\mathcal{C}DC_n$ is a directed cycle without
chords. It is known that, $DC_n$ is non-derogatory and its minimal
polynomial is $x^n-1=\prod_{d|n}\Phi_d(x)$, where $\Phi_m(x)$ is the $m$-th 
cyclotomic polynomial. It is also known that $A(DC_n)$ (the
adjacency
matrix of $DC_n$) is an irreducible matrix but not
a primitive matrix. From  Theorem 1 of~\cite{hoff}, the complement graph $DC_n^c$, is a
polynomial in $DC_n$. So $A(DC^c_n)$ and
$A(DC_n)$ have the same set of eigenvectors and hence the
characteristic polynomial of $A(DC^c_n)$ is given by
$(x-(n-2))\prod_{d|n,d>1}\Phi_d(-(x+1))$ and
$x(x-(n-2))\prod_{d|n,d>2}\Phi_d(-(x+1))$, when $n$ is odd and even,
respectively.
By definition it is clear that $exp(DC_n^c)=2$ for $n\geq 5$. 
The digraphs
which we consider in the
class $\mathcal{C}DC_n$ and their characteristic polynomials are tabulated in
the following table.  Throughout this paper we suppose $k=\lfloor
\frac{n}{2} \rfloor$, where  $\lfloor x\rfloor$ denotes the largest
integer smaller or equal to $x$.

{\small\begin{tabular}{|l|l|l|l|l|}
\hline
 & $X$ & Directed chords  &  Digraph($DC^{----}_8$) & Characteristic polynomial\\
\hline
1 & $DC_n^{(i,n-i)}$ & $i$ to $n-i$,  &
\begin{tikzpicture}

  [scale=.8,auto=left]

 \node (n1) at (3,7)  {1};
  \node (n2) at (4,7.5)  {2};
    \node (n3) at (5,7.4) {3};
  \node (n4) at (6,7)  {4};
\node (n5) at (6,6)  {5};
\node (n6) at (5,5.6)  {6};
\node (n7) at (4,5.2)  {7};
\node (n8) at (3,6)  {8};
\foreach \from/\to in
{n1/n2,n2/n3,n3/n4,n4/n5,n5/n6,n6/n7,n7/n8,n8/n1,n1/n7,n2/n6,n3/n5}
    \draw [->](\from) -- (\to);
\end{tikzpicture}  & $x^n-\sum_{t=1}^{k-1}x^{n-(2t+1)}-1$ \\
& & $i\in\{1,2,\dots, k\}$ & & \\
 \hline
 2 & $DC_n^{(i,k-i)}$ & $i$ to $k-i$,
&\begin{tikzpicture}
  [scale=.8,auto=left]

\node (n1) at (9,7)  {1};
  \node (n2) at (10,7.5)  {2};
    \node (n3) at (11,7.4) {3};
  \node (n4) at (12,7)  {4};
\node (n5) at (12,6)  {5};
\node (n6) at (11,5.6)  {6};
\node (n7) at (10,5.2)  {7};
\node (n8) at (9,6)  {8};

\foreach \from/\to in
{n1/n2,n2/n3,n3/n4,n4/n5,n5/n6,n6/n7,n7/n8,n8/n1,n1/n3}
    \draw [->](\from) -- (\to);

\end{tikzpicture}   & $x^n-\sum_{i=1}^{\lfloor
\frac{k}{2}\rfloor-1}x^{k-(2i+1)}-1$ \\
& & $i\in\{1,2,\ldots,\lfloor
\frac{k}{2}\rfloor-1\}$ & &\\
 \hline

3 & $DC_n^{(i,k+j+i)}$ & fix $j$, where  $1\leq j\leq k-1$ &
\begin{tikzpicture}
  [scale=.8,auto=left]

  \node (n1) at (9,7)  {1};
  \node (n2) at (10,7.5)  {2};
    \node (n3) at (11,7.4) {3};
  \node (n4) at (12,7)  {4};
\node (n5) at (12,6)  {5};
\node (n6) at (11,5.6)  {6};
\node (n7) at (10,5.2)  {7};
\node (n8) at (9,6)  {8};

  \foreach \from/\to in
{n1/n2,n2/n3,n3/n4,n4/n5,n5/n6,n6/n7,n7/n8,n8/n1,n1/n6,n2/n7,n3/n8}
    \draw [->](\from) -- (\to);
\end{tikzpicture}  & $x^n-(k-j)x^{k+j-1}-1$ \\
 & & $i$ to $k+j+i, \;1\leq i\leq k-j$ & &\\
\hline 4 & $DC_n^{n_1,n_2,\ldots,n_r}$ &choose $r$ such that
&\begin{tikzpicture}
  [scale=.8,auto=left]

  \node (n1) at (9,7)  {1};
  \node (n2) at (10,7.5)  {2};
    \node (n3) at (11,7.4) {3};
  \node (n4) at (12,7)  {4};
\node (n5) at (12,6)  {5};
\node (n6) at (11,5.6)  {6};
\node (n7) at (10,5.2)  {7};
\node (n8) at (9,6)  {8};

\foreach \from/\to in
{n1/n2,n2/n3,n3/n4,n4/n5,n5/n6,n6/n7,n7/n8,n8/n1,n8/n3,n8/n5}
    \draw [->](\from) -- (\to);
\end{tikzpicture}   & $x^n-(\sum_{t=1}^rx^{n_t})-1$ \\
& &$n>n_1>\dots >n_r>0$  & &\\
& &  $n$ to $n_t+1$, $t\in\{1,2,\dots,r\}$, & &\\
 
\hline 
5& $DC_n^{(m)}$ & Fix $m$, where $3\leq m\leq
n-1$  &\begin{tikzpicture}
  [scale=.8,auto=left]
 \node (n1) at (9,7)  {1};
  \node (n2) at (10,7.5)  {2};
    \node (n3) at (11,7.4) {3};
  \node (n4) at (12,7)  {4};
\node (n5) at (12,6)  {5};
\node (n6) at (11,5.6)  {6};
\node (n7) at (10,5.2)  {7};
\node (n8) at (9,6)  {8};
\foreach \from/\to in
{n1/n2,n2/n3,n3/n4,n4/n5,n5/n6,n6/n7,n7/n8,n8/n1,n1/n3,n1/n4,n1/n5,n2/n4,n2/n5,
n3/n5}
    \draw [->](\from) -- (\to);
\end{tikzpicture}   & $x^n-(x+1)^{m-2}$ \\
& &all possible arcs  from $i$ to $j$,  & & \\
& & where $i<j-1$ and $3\leq j\leq m$ & &\\
\hline
\end{tabular}}
\vspace*{5mm}

Before proceeding to the proofs, we use the notation $(1,2,\ldots, n,1)$
to
represent a directed cycle $DC_n$.\\ For example,
$(1,7,8,1),(1,2,6,7,8,1),(1,2,3,5,6,7,8,1)\;and\; (1,2,3,4,5,6,7,8,1)$ are the
only ldsgs of $DC_8^{(i,n-i)}$ hence from Theorem~\ref{thm:coe},
$\Psi_{DC_8^{(i,n-i)}}(x)=x^8-x^5-x^3-x-1$.

\begin{lemma}
Let $n\in\Z^+$ and $k=\lfloor \frac{n}{2}\rfloor$. Then
$\Psi_{DC_n^{(i,n-i)}}(x)=x^n-\sum_{t=1}^{k-1}x^{n-(2t+1)}-1$.
\end{lemma}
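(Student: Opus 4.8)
The plan is to apply Theorem~\ref{thm:coe} to $X:=DC_n^{(i,n-i)}$: the coefficient of $x^{n-j}$ in $\Psi_X(x)$ equals $\sum_L(-1)^{p(L)}$, summed over the linear directed subgraphs $L$ of $X$ on exactly $j$ vertices. So the two things I need are (i) a complete list of the directed cycles of $X$, and (ii) the fact that no two of them are vertex-disjoint, which forces every ldsg to be a single directed cycle with $p(L)=1$; then (iii) I read off the polynomial.

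For (i), first observe that the chords actually present are $i\to n-i$ for $1\le i\le k-1$: the value $i=k$ would give a loop (if $n$ is even) or an arc between consecutive cycle vertices (if $n$ is odd), neither of which is admitted. Now exploit the constraints on out-arcs. All tails of chords lie in $\{1,\dots,k-1\}$, so in particular vertex $n$ has the single out-arc $n\to 1$, and for each $a$ with $1\le a\le k-1$ none of the vertices $n-a,n-a+1,\dots,n$ is the tail of a chord (each is at least $n-k+2>k-1$). Consequently, as soon as a directed cycle of $X$ uses a chord $a\to n-a$ it is forced to continue $n-a\to n-a+1\to\cdots\to n\to 1$ and then along the cycle arcs $1\to 2\to\cdots$ until it returns to $a$; it cannot leave any of $1,2,\dots,a-1$ by a chord, since that would re-enter the already-visited block $\{n-a,\dots,n\}$. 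Hence the unique directed cycle of $X$ through the chord $a\to n-a$ is
$$C_a:=(a,\;n-a,\;n-a+1,\;\dots,\;n,\;1,\;2,\;\dots,\;a-1,\;a),$$
and since $a\le k-1<\tfrac{n}{2}$ the sets $\{1,\dots,a\}$ and $\{n-a,\dots,n\}$ are disjoint, so $C_a$ is a simple cycle on $2a+1$ vertices. Any directed cycle of $X$ that uses no chord uses only the cycle arcs $j\to j+1$, hence equals $DC_n$. Thus the directed cycles of $X$ are exactly $DC_n,C_1,\dots,C_{k-1}$.

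For (ii): every $C_a$ contains the vertices $1$ and $n$, and so does $DC_n$, so no two distinct members of $\{DC_n,C_1,\dots,C_{k-1}\}$ are vertex-disjoint; therefore every ldsg of $X$ consists of a single one of these cycles, with $p(L)=1$. For (iii), Theorem~\ref{thm:coe} then says the coefficient of $x^{n-j}$ in $\Psi_X(x)$ is $-1$ when $X$ has a directed cycle of length $j$ (necessarily unique) and $0$ otherwise. The cycle $DC_n$ supplies the $-1$ in front of $x^{n-n}=1$, and each $C_a$ ($1\le a\le k-1$), having length $2a+1$, supplies the $-1$ in front of $x^{\,n-(2a+1)}$; these exponents are pairwise distinct and all lie strictly between $0$ and $n$, and no further length occurs. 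Renaming $a$ as $t$ gives $\Psi_{DC_n^{(i,n-i)}}(x)=x^n-\sum_{t=1}^{k-1}x^{\,n-(2t+1)}-1$.

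The main obstacle is part (i): I need to make the forced-traversal argument watertight --- in particular the wrap-through at vertex $n$, the impossibility of using a second chord, and the degenerate index $i=k$ for both parities of $n$. Once the list of directed cycles is pinned down, the remaining small cases ($n=3$, with empty chord set, and $n=4$, with a single chord, so the sum is empty respectively a single term) and the extraction of coefficients are routine bookkeeping.
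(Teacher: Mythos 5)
Your proof is correct and takes essentially the same route as the paper: both apply Theorem~\ref{thm:coe}, note that every directed cycle of $DC_n^{(i,n-i)}$ passes through the vertices $1$ and $n$ (so every ldsg is a single cycle with $p(L)=1$), and identify exactly one ldsg of each odd length $2t+1$ for $1\le t\le k-1$ together with $DC_n$ itself; you simply make explicit the forced-traversal enumeration that the paper states without justification. (One harmless slip: the vertices $n-a,\dots,n$ are bounded below by $n-k+1$, not $n-k+2$, but since $n\ge 2k$ the needed inequality $n-k+1>k-1$ still holds, so the argument is unaffected.)
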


\begin{proof}
Let $\Psi_{DC_n^{(i,n-i)}}(x)=x^n+\sum_{i=0}^{n-1}a_ix^{n-i}$. By
definition we have $a_n=-1$ and there are no self loops, parallel
arcs and ldsgs of even length ($<n$) in $DC_n^{(i,n-i)}$. Further
$(1,2,3,\ldots,i, n-i,n-i+1,\ldots,n,1)$ is the only ldsg of length
$2i+1$, for each $i\in\{1,2,\ldots,k-1\}$. Consequently, from
Theorem~\ref{thm:coe}, we have  $a_1=0,\;a_{2i+1}=-1\; and\;a_{2i}=0
\; for\; 1\leq i\leq k-1\;(\;a_{2k}=0\; when\;n=2k+1\;)$. Hence the
result follows.
\end{proof}

\section*{\small{Digraphs from $\mathcal{C}DF_n$}}

{\tiny{\begin{tabular}{|l|l|l|l|l|}
\hline
 & $X$ & Spokes  &  Digraph  & Characteristic polynomial\\
\hline
1 & $ADF_n$ & $1$ to $2i$ and $2i+1$ to $1$
 & \begin{tikzpicture}
  [scale=.5,auto=left]

  \node (n1) at (2,6)  {1};
  \node (n2) at (3,7)  {2};
    \node (n3) at (3,6)  {3};
  \node (n4) at (3,5)  {4};
\node (n5) at (3,4)  {5};

  \foreach \from/\to in {n1/n2,n2/n3,n3/n4,n4/n5,n1/n4,n3/n1,n5/n1}
    \draw [->](\from) -- (\to);

\end{tikzpicture} & $\Psi_{ADF_{2k+1}}(x)=x^{2k+1}-\sum_{i=1}^kix^{2(i-1)}$\\

 &  & where $i=1,2,\ldots,k$ &
&$\;\mbox{and}\;\Psi_{ADF_{2k}}(x)=x\Psi_{ADF_{2k-1}}(x)$\\

\hline
2 & $PDF_n$ & $1$ to $i$, $1\leq i\leq n$ and $n$ to $1$
 & \begin{tikzpicture}
  [scale=.5,auto=left]

  \node (n1) at (2,6)  [circle,draw] {1} edge [in=100,out=60,loop] ();
  \node (n2) at (3,7)  {2};
    \node (n3) at (3,6)  {3};
  \node (n4) at (3,5)  {4};
\node (n5) at (3,4)  {5};

 \foreach \from/\to in {n1/n2,n2/n3,n3/n4,n4/n5,n1/n4,n1/n3,n5/n1,n1/n5}
    \draw [->](\from) -- (\to);

\end{tikzpicture} & $x^n-\sum_{i=1}^nx^{n-i}$\\
 \hline
3 & $kDF_n$ & $1$ to $i$  for  $i\ne n,k$, and $n,k$ to $1$   &
\begin{tikzpicture}
  [scale=.5,auto=left]
   \node (n1) at (14,6)  {1};
  \node (n2) at (15,9)  {2};
    \node (n3) at (15,8.1)  {3};
  \node (n4) at (15,7.2)  {4};
\node (n5) at (15,6.3)  {5};
\node (n6) at (15,5.4)  {6};
\node (n7) at (15,4.5)  {7};

  \foreach \from/\to in
{n1/n2,n2/n3,n3/n4,n4/n5,n5/n6,n6/n7,n3/n1,n1/n4,n1/n5,n1/n6,n7/n1}
    \draw [->](\from) -- (\to);

\end{tikzpicture}  & $(x^n+x^{k-2})-2(\sum_{i=3}^kx^{n-i})-\sum_{i=0}^{n-(k+1)}
x^i$ \\
 \hline
4 & $HDF_n$ & $1$ to $i$ for  $i=1,2,\ldots,k$
&\begin{tikzpicture}
  [scale=.5,auto=left]

 \node (n1) at (14,6)  {1};
  \node (n2) at (15,9)  {2};
    \node (n3) at (15,8.1)  {3};
  \node (n4) at (15,7.2)  {4};
\node (n5) at (15,6.3)  {5};
\node (n6) at (15,5.4)  {6};
\node (n7) at (15,4.5)  {7};

  \foreach \from/\to in
{n1/n2,n2/n3,n3/n4,n4/n5,n5/n6,n6/n7,n1/n3,n4/n1,n5/n1,n6/n1,n7/n1}
    \draw [->](\from) -- (\to);

\end{tikzpicture}  &
$\Psi_{HDF_{2k+1}}(x)=(x^{2k+1}-(k-1)x^{k-1})-\sum_{i=1}^{k-1}i[x^{2k-i-1}+x^{
i-1}
]$ \\
 & & $j$ to $1$ for $j=k+1,\ldots,n$ & &
$\Psi_{HDF_{2k}}(x)=x^{2k}-\sum_{i=1}^{k-1}i[x^{2k-i-2}+x^{i-1}]$\\
\hline
5 & $TDF_n$ & $1$ to $i$  for $i=3j$ or $i=3j+2$,& \begin{tikzpicture}
  [scale=.5,auto=left]

 \node (n1) at (14,6)  {1};
  \node (n2) at (15,9)  {2};
    \node (n3) at (15,8.1)  {3};
  \node (n4) at (15,7.2)  {4};
\node (n5) at (15,6.3)  {5};
\node (n6) at (15,5.4)  {6};
\node (n7) at (15,4.5)  {7};

  \foreach \from/\to in
{n1/n2,n2/n3,n3/n4,n4/n5,n5/n6,n6/n7,n1/n3,n4/n1,n1/n5,n1/n6,n7/n1}
    \draw [->](\from) -- (\to);
\end{tikzpicture} & $(x^{3k}-1)-\sum_{i=1}^{k-1}[x^{3(k-i)-2}+(k-i)x^{3(k-i)-1}
+((k-i)+1)x^{3(k-i)}]$,\\
 &  & $i$ to $1$ for $i=3j+1$, & &
$x^{3k+1}-\sum_{i=1}^{k}((k-i)+1)[x^{3(k-i)}+x^{3(k-i)+1}]$,\\
 & & and $n$ to $1$ & &
$x^{3k+2}-\sum_{i=1}^{k}[x^{3(k-i)}+((k-i)+1)x^{3(k-i)+2}+((k-i)+2)x^{3(k-i)+1}]
$\\
\hline
\end{tabular}}}
\vspace*{4mm}

Now we construct a digraph from $ADF_n$ by adding self loops.
Let $X$ be a digraph constructed from $ADF_n$ by adding $k+1$ and $k$ self loops
at the vertex $1$ when $n=2k+1$ and $n=2k$ respectively. Then its characteristic
polynomial is
$$\Psi_X(x)=
\begin{cases}
x^{2k+1}-\sum_{i=1}^{k+1}ix^{2(i-1)}, & \mbox{ when n=2k+1,}\\
x(x^{2k-1}-\sum_{i=1}^kix^{2(i-1)}), & \mbox{ when n=2k.}
\end{cases}$$
The following observation shows that $\Psi_X(x)$ is irreducible whenever
$n=2k+1$ and is the product of $x$ and an irreducible polynomial when $n=2k$.
Hence $\Psi_X(x)$ is non-derogatory for all $n\geq 3$.

\begin{obs}\label{obs:bra}
 A.T.Brauer~\cite{bra}  proved  that the polynomials of the form
\begin{itemize}
 \item $f_m(x)=x^m-a_1x^{m-1}-a_2x^{m-2}-\dots-a_{m-1}x-a_m$  , where  $m\geq
2$ , $a_1,a_2,\ldots,a_m\in \Z^{+}$ and
$a_1\geq a_2\geq \dots\geq a_m$.
\item  $g_m(x) =
x^{2m+1}\pm(a_1x^{2m} + a_2x^{2m-1}+\dots+ a_{2m+1})$,  where
$a_1> a_3>\dots>a_{2m+1}>0$ and $a_2=a_3=\dots = a_{2m}=0$.
\end{itemize}
are irreducible over $\Q$, the field of rational numbers.
\end{obs}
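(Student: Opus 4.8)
The statement is A.\,T.~Brauer's classical irreducibility criterion \cite{bra}, so one legitimate option is simply to quote it; for completeness I will sketch the route I would follow. Both $f_m$ and $g_m$ are polynomials having exactly one root of modulus $\ge 1$, so the plan is: (i) prove a root-location lemma, and then (ii) deduce irreducibility via Gauss's lemma. For (i), in the $f_m$ case I would first observe that $x\mapsto\sum_{i=1}^{m}a_ix^{-i}$ is strictly decreasing from $+\infty$ to $0$ on $(0,\infty)$, so $f_m$ has a unique positive root $\rho$; since $\sum a_i\ge a_1+a_m\ge 2>1$ for $m\ge 2$ we get $\rho>1$, and $\rho$ is simple because $f_m$ changes sign there. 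The delicate claim is that \emph{every other} root lies strictly inside the unit circle---the naive triangle inequality applied to $z^m=a_1z^{m-1}+\dots+a_m$ only yields $|z|<\rho$. Here I would pass to the reciprocal polynomial $1-a_1x-\dots-a_mx^m$, multiply by $(x-1)$ so that all but one of its coefficients become non-negative, and estimate on the closed unit disk; this confines the remaining roots strictly inside the unit circle, and Rouch\'e's theorem (against a suitable comparison polynomial, as in Brauer's original argument) shows that $1/\rho$ is the only zero of the reciprocal polynomial in that region. The $g_m$-form is treated the same way, using that apart from the leading term only the even powers of $x$ occur (because $a_2=a_4=\dots=0$), which again produces a single dominant positive root with all other roots inside the unit circle.

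Granting the root-location lemma, step (ii) is short. By Gauss's lemma a nontrivial factorization over $\Q$ would give $f_m=GH$ with $G,H\in\Z[x]$ monic of positive degree; the dominant root $\rho$ lies in exactly one of the factors, say $H$, so every root of $G$ is strictly inside the unit circle and hence $|G(0)|=\prod_{G(\zeta)=0}|\zeta|<1$. Since $G(0)\in\Z$ this forces $G(0)=0$, i.e.\ $0$ is a root of $f_m$, contradicting $f_m(0)=-a_m\ne 0$; the same works for $g_m$ since $g_m(0)=\pm a_{2m+1}\ne 0$. I expect the only genuine obstacle to be the root-location lemma, and within it the $g_m$-case and the borderline case $a_1=1$ (where $a_1=\dots=a_m=1$), each of which needs the estimate run a little more carefully so that the ``all other roots strictly inside the unit circle'' conclusion and the simplicity of $\rho$ survive. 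It is worth noting that this lies beyond Perron's irreducibility criterion, whose hypothesis $a_1>1+a_2+\dots+a_m$ is strictly stronger than Brauer's $a_1\ge\dots\ge a_m\ge 1$, so one genuinely needs Brauer's argument.

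Finally I would record how the Observation is applied to the digraph $X$ at hand. For $n=2k+1$, expanding $\Psi_X(x)=x^{2k+1}-\sum_{i=1}^{k+1}ix^{2(i-1)}$ gives
\[
\Psi_X(x)=x^{2k+1}-\big((k+1)x^{2k}+kx^{2k-2}+\dots+2x^{2}+1\big),
\]
which is exactly the $g_m$-form with $m=k$, strict chain $k+1>k>\dots>1>0$ and vanishing even-indexed coefficients, so $\Psi_X$ is irreducible over $\Q$. For $n=2k$, the cofactor $x^{2k-1}-\sum_{i=1}^{k}ix^{2(i-1)}$ has the same shape with $m=k-1$, hence is irreducible, and its constant term $-1$ shows that $x$ does not divide it; thus $\Psi_X=x\cdot(\text{irreducible})$ with coprime factors. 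In either case $\Psi_X$ is squarefree, because an irreducible polynomial over $\Q$ is separable; consequently $A(X)$ has $n$ distinct eigenvalues, $\Psi_X=m_X$, and $X$ is non-derogatory for every $n\ge 3$.
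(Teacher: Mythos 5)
The paper contains no proof of this Observation at all: it is stated purely as an attributed citation of Brauer's 1951 theorem and is then simply invoked to conclude irreducibility of $\Psi_{X_n^{m-1}}$, $\Psi_{Y_n^{n_1,\ldots,n_d}}$ and of the loop-augmented $ADF_n$ polynomial. So your proposal is not matched against any argument in the text; what you do is reconstruct Brauer's own proof, and your reconstruction has the right shape. The deduction step is exactly Brauer's mechanism: with one simple root $\rho>1$ and all other roots strictly inside the unit circle, a monic integer factorization $f=GH$ with $\rho$ a root of $H$ forces $|G(0)|=\prod|\zeta|<1$, impossible for a nonzero integer (nonzero because $f(0)=-a_m\neq 0$, resp.\ $\pm a_{2m+1}\neq 0$). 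Your application paragraph also matches the paper's use of the Observation, you implicitly correct the statement's typo (the condition should read $a_2=a_4=\dots=a_{2m}=0$, since $a_3>0$ is assumed), and your remark that Perron's criterion (Theorem~\ref{thm:Perron}) is strictly weaker is apt: the case $a_1=\dots=a_m=1$, which occurs for $PDF_n$-type polynomials, fails Perron's hypothesis. The one caveat is that the root-location lemma, which carries essentially all of the content of Brauer's theorem, is only announced, not proved: the multiply-by-$(x-1)$/Rouch\'e estimate is a plan, the $g_m$ case is waved through as ``the same way'' even though the $\pm$ sign and the gap structure require their own estimate, and ``$\rho$ is simple because $f_m$ changes sign'' only yields odd multiplicity (simplicity follows instead from strict monotonicity of $x^{-m}f_m(x)$ on $(0,\infty)$, or from the lemma itself, since only one root lies outside the open unit disc). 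Since the statement is explicitly a quoted result, deferring that lemma to the citation is legitimate --- indeed that is precisely what the paper does --- but read as a standalone proof your text is an outline rather than a complete argument.
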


Now we construct few digraphs from $PDF_n$ by adding self loops or arcs.
\begin{itemize}
\item Let $X_n^{m-1}$ be a digraph constructed from $PDF_n$ by adding $m-1$ self
loops at the vertex $1$ then it is easy to see that
$\Psi_{X_n^{m-1}}(x)=x^n-mx^{n-1}-\sum_{i=0}^{n-2}x^i$. Now from the
Observation~\ref{obs:bra}  $\Psi_{X_n^{m-1}}(x)$ is an irreducible polynomial,
hence $X_n^{m-1}$ is a non-derogatory digraph.
\item  Let $0<n_1<n_2<\dots <n_d<n$ and  $Y_n^{n_1,n_2,\ldots,n_d}$ be an
another digraph constructed from $PDF_n$ by adding an arc from $n_i$ to $1$
where $i=1,2,\ldots,d$ and $m-1(\geq d)$ self loops at the vertex $1$. Then it
is easy to see that
$\Psi_{Y_n^{n_1,n_2,\ldots,n_d}}(x)=x^n-mx^{n-1}-(d+1)(\sum_{i=2}^{n_1}x^{n-i}
)-d(\sum_{i=n_1+1}^{n_2}x^{n-i})-(d-1)(\sum_{
i=n_2+1}^{n_3}x^{n-i})-\dots-2(\sum_{i=n_{d-1}+1}^{n_d}x^{n-i})-\sum_{i=n_d+1
}^nx^{n-i}$. Again from Observation~\ref{obs:bra}
$\Psi_{Y_n^{n_1,n_2,\ldots,n_d}}(x)$ is an irreducible polynomial, hence
$Y_n^{n_1,n_2,\ldots,n_d}$ is a non-derogatory digraph.

\item Now we will construct another class of digraphs $Z_n^j$ from $PDF_n$ by
adding a self loop at the vertex $j\in\{2,3,\ldots,n\}$.  Clearly in this
example $p(L) =1$ for every ldsg of $Z_n^j$ is not true.
\end{itemize}

\begin{lemma}
If $Z_n^j$ be a digraph constructed from $PDF_n$ by adding a self loop at the
vertex $j\in\{2,3,\ldots,n\}$, then
$\Psi_{Z_n^j}(x)=x^n-2x^{n-1}-\sum_{i=0}^{j-3}x^i\;for\; j>2$ and
$\Psi_{Z_n^2}(x)=x^n-2x^{n-1}$.
\end{lemma}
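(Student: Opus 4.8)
The plan is to compute $\Psi_{Z_n^j}(x)$ directly from Theorem~\ref{thm:coe}, so the whole argument reduces to enumerating the linear directed subgraphs (ldsgs) of $Z_n^j$, keeping the signs $(-1)^{p(L)}$ since here $p(L)=1$ will fail for some ldsgs. First I would record the cycle structure of $PDF_n$ itself: the only arc entering vertex $1$ is $n\to 1$, and the vertices $2,3,\ldots,n$ carry only the directed path $2\to 3\to\cdots\to n$, so apart from the self-loop at $1$ every directed cycle of $PDF_n$ is of the form
$$C_r:=(1,\,n-r+2,\,n-r+3,\,\ldots,\,n,\,1),\qquad 2\le r\le n,$$
a cycle of length $r$ with vertex set $V(C_r)=\{1\}\cup\{n-r+2,\ldots,n\}$; in particular every cycle of $PDF_n$ passes through vertex $1$. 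Passing to $Z_n^j$ only adds the self-loop at vertex $j$, which creates no new cycle of length greater than $1$, since a loop cannot be an arc of any longer cycle.

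Next I would list all ldsgs of $Z_n^j$. Each is a vertex-disjoint union of the cycles just described; since all the $C_r$ contain vertex $1$, such a union contains at most one $C_r$, and it contains the loop at $1$ only if it contains no $C_r$. The complete list is therefore: the loop at $1$; the loop at $j$; each $C_r$ with $2\le r\le n$; the loop at $1$ together with the loop at $j$ (a $2$-vertex ldsg); and, for $2\le r\le n$, the loop at $j$ together with $C_r$, which is vertex-disjoint precisely when $j\notin\{n-r+2,\ldots,n\}$, i.e.\ (as $j\ge 2$) when $j\le n-r+1$, that is for $2\le r\le n-j+1$.

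Then I would read off the coefficients. Writing $\Psi_{Z_n^j}(x)=x^n+\sum_{i=1}^n a_ix^{n-i}$, Theorem~\ref{thm:coe} gives $a_i=\sum_{L}(-1)^{p(L)}$ summed over ldsgs $L$ on exactly $i$ vertices. From the list above, $a_1=-1-1=-2$; for $i=2$ the $2$-vertex ldsgs are $C_2$ (with $p=1$) and the pair of loops at $1$ and $j$ (with $p=2$), so $a_2=-1+1=0$; and for $i\ge 3$ the only $i$-vertex ldsgs are $C_i$ ($p=1$) and, when it exists, the disjoint union of the loop at $j$ with $C_{i-1}$ ($p=2$), the latter occurring exactly when $i-1\le n-j+1$, i.e.\ $i\le n-j+2$. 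Hence $a_i=0$ for $3\le i\le n-j+2$ and $a_i=-1$ for $n-j+2<i\le n$. When $j=2$ the second range is empty and $\Psi_{Z_n^2}(x)=x^n-2x^{n-1}$; when $j>2$ one gets
$$\Psi_{Z_n^j}(x)=x^n-2x^{n-1}-\sum_{i=n-j+3}^{n}x^{n-i}=x^n-2x^{n-1}-\sum_{t=0}^{j-3}x^{t},$$
the asserted formula.

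The only point needing care is the bookkeeping for the union of the loop at $j$ with $C_{i-1}$: one must determine exactly for which $i$ the vertex $j$ lies off the cycle $C_{i-1}$, and in particular handle the top index $i=n$. This is what separates $j=2$, where the loop at $2$ together with $C_{n-1}$ is still a legitimate ldsg so that $a_n=0$, from $j>2$, where $j\in V(C_{n-1})$ and hence $a_n=-1$. Everything else is a routine sign count.
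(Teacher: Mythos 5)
Your proposal is correct and follows essentially the same route as the paper: apply Theorem~\ref{thm:coe}, observe that every cycle of $PDF_n$ of length $\ge 2$ has the form $(1,i,i+1,\ldots,n,1)$, and show that for each size the ldsg consisting of the loop at $j$ together with a shorter cycle cancels the single longer cycle whenever $j$ lies off that shorter cycle, leaving exactly the coefficients $-2$ and the $j-2$ trailing $-1$'s. Your disjointness criterion $i\le n-j+2$ is just a cleaner packaging of the paper's case split $i\in\{2,\ldots,j-1\}$ versus $i\in\{j,\ldots,n-1\}$.
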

\begin{proof} Let $\Psi_{Z_n^j}(x)=x^n+\sum_{i=1}^na_ix^{n-i}$.
 By definition there are only  two self loops at the vertices $1$ and $j$, so
$a_1=-2$.
$L_1=\{(1,1),(j,j)\}$ and $L_2=\{(1,n,1)\}$ are the only ldsgs of $Z_n^j$ with
exactly two vertices, $p(L_1)=2$ and $p(L_2)=1$ hence $a_2=0$. It is clear that
other coefficients of $\Psi_{Z_n^j}(x)$ depends on the value of $j$.

If $j=2$, then $L_1=\{(2,2),(1,i+1,i+2,\ldots, n,1)\}$ and
$L_2=\{(1,i,i+1,\ldots, n,1)\}$ where $i\in\{2,3,\ldots, n-1\}$ are two ldsgs 
with exactly $n-i+2$ vertices, further $p(L_1)=2, p(L_2)=1$ for every $i$,
consequently $a_t=0\; for\; t=3,4,\ldots,n$. Hence
$\Psi_{Z_n^2}(x)=x^n-2x^{n-1}$.

Now suppose $j>2$.  Now we have to show  $a_{n-i}=-1\;for\;
i=0,1,2,\ldots,j-3$ and $0$, otherwise. It is clear that
$(1,i,i+1,i+2,\ldots,n,1)$ is the only ldsg with $n-i+2$ vertices where
$i=\{2,3,\ldots,j-1\}$, as vertex $j$ needs to be included in the ldsg, hence
$a_{n-i}=-1\; for\; i=0,1,2,\ldots,j-3$. On the other hand if
$i\in\{j,j+1,\ldots,
n-1\}$, then  $L_1=\{(j,j),(1,i+1,i+2,\ldots, n,1)\}$ and
$L_2=\{(1,i,i+1,\ldots, n,1)\}$  are two ldsgs  with exactly $n-i+j$ vertices,
further $p(L_1)=2, p(L_2)=1$ for every $i$, consequently $a_t=0\; for\;
t=3,4,\ldots,n-(j-2)$.
\end{proof}

Now we can construct few more  digraphs having irreducible characteristic
polynomial by adding self loops at the vertex $1$ to the  the digraphs
constructed from $ADF_n$ and $PDF_n$ such that the coefficients of
characteristic polynomials of these digraphs satisfy the  criterion of the
following well known theorem.

\begin{theorem}(Perron's criterion)\label{thm:Perron}
 Let $f(x)=x^n+a_1x^{n-1}+\dots+a_n$ be a polynomial with
integer coefficients. If $|a_1| > 1 + |a_2| + \dots+ |a_n|$, then $f$ is
irreducible.
\end{theorem}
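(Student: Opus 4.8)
The plan is to reproduce Perron's classical argument, based on Rouch\'e's theorem, with one preliminary remark: the statement also tacitly requires $a_n\neq 0$, since otherwise $x^2+5x=x(x+5)$ already violates it; in every application made in this paper the constant term is $\pm1$, so this costs nothing. Assume then $a_n\neq 0$. By Gauss's lemma it suffices to show that $f$ admits no factorization $f=gh$ with $g,h\in\Z[x]$ monic of degree at least $1$, so suppose toward a contradiction that it does.

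First I would pin down the moduli of the roots of $f$. Since $a_1\in\Z$ and $|a_1|>1$, we have $|a_1|\geq 2$. Set $P(z)=z^{n}+a_1z^{n-1}=z^{n-1}(z+a_1)$. For $|z|=1$,
$$|f(z)-P(z)|=\bigl|a_2z^{n-2}+\cdots+a_n\bigr|\le |a_2|+\cdots+|a_n|<|a_1|-1\le |a_1|-|z|\le |z+a_1|=|P(z)|,$$
so $|f-P|<|P|$ on the unit circle. Rouch\'e's theorem then shows that $f$ and $P$ have the same number of zeros (counted with multiplicity) in $\{|z|<1\}$; as $P$ has a zero of order $n-1$ at $0$ and a simple zero at $-a_1$ with $|-a_1|\geq 2$, that common number is $n-1$. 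Moreover $|f(z)|\geq |P(z)|-|f(z)-P(z)|>0$ on $|z|=1$, so $f$ has no unimodular zero. Hence $f$ has exactly one zero $\theta$, necessarily of multiplicity $1$, with $|\theta|>1$, and its other $n-1$ zeros lie in $\{|z|<1\}$.

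To conclude: the simple root $\theta$ is a root of exactly one of $g,h$, say $g(\theta)=0$ while $h(\theta)\neq 0$. Then every root of the monic polynomial $h$ has modulus $<1$, so $|h(0)|=\prod_{h(\beta)=0}|\beta|<1$. But $h(0)\in\Z$, and $h(0)\,g(0)=f(0)=a_n\neq 0$ forces $h(0)\neq 0$, hence $|h(0)|\geq 1$ --- a contradiction. Therefore $f$ is irreducible over $\Q$.

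The triangle-inequality estimate and the count of zeros of $P$ are routine; the one point that needs care is that the comparison $|f-P|<|P|$ must be \emph{strict} all along $|z|=1$ --- this is precisely where the strict hypothesis $|a_1|>1+|a_2|+\cdots+|a_n|$ enters, and it also simultaneously delivers the absence of unimodular roots --- together with the (easily overlooked) use of $a_n\neq 0$ to guarantee $|h(0)|\geq 1$.
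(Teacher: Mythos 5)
Your proof is correct, but note that the paper itself offers no argument here: Perron's criterion is quoted as a ``well known theorem'' and used as a black box, so there is no internal proof to compare against. What you supply is the standard Rouch\'e-type argument, and all the steps check out: on $|z|=1$ the estimate $|f(z)-(z^n+a_1z^{n-1})|\le |a_2|+\dots+|a_n|<|a_1|-1=|a_1|-|z|\le |z+a_1|=|z^{n-1}(z+a_1)|$ is valid, Rouch\'e then places exactly $n-1$ roots of $f$ strictly inside the unit disc and none on the circle, and the final step (every root of one monic integer factor $h$ would have modulus $<1$, forcing $|h(0)|<1$ while $h(0)$ is a nonzero integer) is the classical contradiction; Gauss's lemma is correctly invoked to reduce to monic factors in $\Z[x]$. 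Your preliminary remark is also a genuine catch: as printed in the paper the statement omits the hypothesis $a_n\neq 0$ (equivalently, that $f$ has nonzero constant term), without which the claim fails, e.g.\ for $x^2+5x$; since every characteristic polynomial to which the paper applies the criterion has constant term $\pm 1$, the omission is harmless in context, but your proof rightly makes the hypothesis explicit, since it is exactly what guarantees $h(0)\neq 0$ in the last step. In short: a complete and correct proof of a result the paper only cites, following the classical route one would find in Perron's original argument.
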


\section*{\small{Digraphs from $\mathcal{C}DW_n$}}
Recall that the class $\mathcal{C}DW_n$ contains a directed cycle
$DC_{n-1}$ (with vertices labeled as $1,2,\ldots, n-1$) and an
additional vertex $n$. Also for each $i$ either there is a directed
arc from $n$ to $i$ or from $i$ to $n$.  For example, if we choose
all the spokes having a unique direction {\it{i.e.,}} all spokes are
from $n$ to $1,2,\ldots,n-1$ or
 from $1,2,\ldots,n-1$ to $n$ and  denote this digraph by  $UDW_n$, then it is easy to see that
$\Psi_{UDW_n}(x)=
 x^n-x$ and  a simple calculation show that for the complement of the
 graph $UDW_n$, we have
$\Psi_{UDW^c_{2k}}(x)=x(\prod_{d|2k-1,d>1}\Phi_{2d}(x+1))(x-(2k-3))\;and\;
\Psi_{UDW^c_{2k+1}}(x)=x^2(\prod_{d|2k,d>2}\Phi_{2d}(x+1))(x-(2k-2))$.
Clearly $UDW_n$ and $UDW_{2k}^c,k\geq 2$ are non-derogatory, whereas
$UDW_{2k+1}^c,k\geq 2$ is derogatory. Also one can check that the
minimal polynomial of $UDW_{2k+1}^c,\;for\;k\geq 2$ is
$x^{-1}(\Psi_{UDW^c_{2k+1}}(x))$. \\ The following table gives few digraphs and
their characteristic polynomials from
the class $\mathcal{C}DW_n$.\\

{\tiny{\begin{tabular}{|l|l|l|l|}
\hline
$X$ & Spokes  &  Digraph  & Characteristic polynomial\\
\hline
 $ADW_n$ & $n$ to $2i-1$ and $2i$ to $n$
 & \begin{tikzpicture}
  [scale=.8,auto=left]

  \node (n1) at (1,4)  {1};
  \node (n2) at (3,4)  {2};
    \node (n5) at (2,3)  {5};
  \node (n3) at (3,2)  {3};
\node (n4) at (1,2)  {4};

  \foreach \from/\to in {n1/n2,n2/n3,n3/n4,n4/n1,n5/n1,n2/n5,n5/n3,n4/n5}
    \draw [->](\from) -- (\to);

\end{tikzpicture} &
$\Psi_{ADW_{2k+1}}=(x^{2k+1}-x)-k\big(\sum_{i=0}^{k-1}x^{2i}\big)$\\

  & where $i=1,2,\ldots,k$ &
&$\Psi_{ADW_{2k}}=(x^{2k}-2x)-\sum_{i=2}^{k-1}ix^{2i-1}-\sum_{j=2}^k(j-1)x^{
2(k-j)}$\\

\hline
 $RADW_{2k+1}$ & $n$ to $2i$, $2i-1$ to $n$ and
 & \begin{tikzpicture}
  [scale=.8,auto=left]
  \node (n1) at (1,4)  {1};
  \node (n2) at (3,4)  {2};
    \node (n5) at (2,3)  {5};
  \node (n3) at (3,2)  {3};
\node (n4) at (1,2)  {4};

  \foreach \from/\to in {n1/n2,n2/n3,n3/n4,n4/n1,n1/n5,n5/n2,n3/n5,n4/n5,n5/n4}
    \draw [->](\from) -- (\to);

\end{tikzpicture} &
$(x^{2k+1}-2x)-k\big(\sum_{i=0}^{k-1}x^{2i}\big)-\sum_{i=1}^{k-1}x^{2i+1}$\\
  &$n-1$ to $n$, $1\leq i\leq k$ &  &\\
 \hline
 $kDW_n$ &$n$ to $i$, $i\ne k$ and $k$ to $n$ & \begin{tikzpicture}
  [scale=.8,auto=left]

 \node (n1) at (13.3,4)  {1};
  \node (n2) at (14.1,4)  {2};
    \node (n3) at (14.9,4)  {3};
  \node (n4) at (15.7,4)  {4};
\node (n8) at (14.5,3.3)  {8};
\node (n5) at (15.7,2.5)  {5};
\node (n6) at (15,2.5)  {6};
\node (n7) at (13.3,2.5)  {7};

  \foreach \from/\to in
{n1/n2,n2/n3,n3/n4,n4/n5,n5/n6,n6/n7,n7/n1,n8/n1,n8/n2,n8/n3,n4/n8,n8/n5,n8/n6,
n8/n7}
    \draw [->](\from) -- (\to);

\end{tikzpicture}& $x^n-x^{n-3}-x^{n-4}-\dots -x^3-x^2-2x-1$.\\
\hline
 $HDW_n$ &$n$ to $i$ for $i=1,2\ldots k$ &\begin{tikzpicture}
  [scale=.8,auto=left]

 \node (n1) at (13.3,4)  {1};
  \node (n2) at (14.1,4)  {2};
    \node (n3) at (14.9,4)  {3};
  \node (n4) at (15.7,4)  {4};
\node (n8) at (14.5,3.3)  {8};
\node (n5) at (15.7,2.5)  {5};
\node (n6) at (15,2.5)  {6};
\node (n7) at (13.3,2.5)  {7};
 \foreach \from/\to in
{n1/n2,n2/n3,n3/n4,n4/n5,n5/n6,n6/n7,n7/n1,n8/n1,n8/n2,n8/n3,n8/n4,n5/n8,n6/n8,
n7/n8}
    \draw [->](\from) -- (\to);
\end{tikzpicture}
 & $(x^{2k+1}-x)-\sum_{i=1}^{k-1}i(x^{i-1}+x^{2k-(i+1)})-kx^{k-1},n=2k+1$\\
 & $j$ to $n$, $j=k+1,..,n-1$& &
$(x^{2k}-x)-\sum_{i=1}^{k-1}i(x^{i-1}+x^{2k-(i+2)}),\;for\;n=2k$\\
\hline

\end{tabular}}}
\paragraph{Distinct eigenvalues:}
We already saw that the characteristic polynomials of few digraphs are
irreducible. It is verified that all the digraphs (except $Z_n^2$)  which are constructed in this
paper have distinct eigenvalues.  We show this for a few of them by using the
following methods.

\begin{itemize}
 \item [Method 1:] If $\gcd(f(x),f'(x))=1$, where $f'(x)$ is the derivative of
$f(x)$, then all the roots of $f(x)$ are distinct. This method can be  applied
for $DC_n^{(i,k+j-i)},\;DC_n^{(m)},\; Z_n^3$.

\item [Method 2:] If $\gcd(f(x),f'(x))=1$ in $\Z_2[x]$, where $\Z_2$ is a finite
field with $2$ elements, then all the roots of $f(x)$ are distinct. The digraphs
 $DC_{2k+1}^{(i,n-i)},\; ADF_{2k+1}, DC_n^{(i,k-i)}$(for $n$ and $k$  odd)
can be shown to  have distinct eigenvalues by this method.
\item [Method 3:] By complete factorization. For example,\\
$\Psi_{ADW_{2k+1}}(x)= x(x^{2k}-1)-k(1+x^2+x^4+\dots+ x^{2(k-1)})=
(x^3-x-k)\prod_{d|2n,d>2} \Phi_d(x)\\
\;similarly \;\Psi_{RADW_{2k+1}}(x)=(x^3-2x-k)\prod_{d|2n,d>2}\Phi_d(x)$.
\end{itemize}

\paragraph{Exponents:}
We now compute the exponents of some of these digraphs for $n\geq 10$.
\begin{lemma}
Let $ADF_n$ be a  directed fan with alternating spokes of order $n=2k+1$, where
$k>2$. Then $exp(ADF_n)=9$
\end{lemma}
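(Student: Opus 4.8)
The plan is to translate everything into walk-counting via Theorem~\ref{thm:walk}: $(A^m)_{uv}$ is the number of walks of length exactly $m$ from $u$ to $v$, so $exp(ADF_n)$ is the least $m$ with $A^m>0$. Since $ADF_n$ is strongly connected, $A$ has no zero row or column, hence $A^m>0\Rightarrow A^{m+1}>0$; so it is enough to prove $A^9>0$ and $A^8\not>0$. Write $n=2k+1$ and recall the arcs of $ADF_n$: the path $2\to3\to\cdots\to n$, the arcs $1\to 2i$ ($1\le i\le k$), and the arcs $2i+1\to1$ ($1\le i\le k$). The only cycles of $ADF_n$ are $1\to 2i\to 2i+1\to\cdots\to 2j+1\to1$ for $1\le i\le j\le k$, of (odd) lengths $3,5,7,\ldots,2k+1$, so every closed walk at vertex $1$ has length in the numerical semigroup $S=\langle 3,5,7,\ldots,2k+1\rangle$; because $k>2$ this satisfies $S\supseteq\langle3,5,7\rangle=\{0,3,5,6,7,8,9,\ldots\}$, and in all cases $S$ contains none of $1,2,4$.

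For the lower bound I would use the pair $(n-1,3)$. The only arc out of $n-1=2k$ goes to $n$, and the only arc out of $n$ goes to $1$; dually the only arc into $3$ is from $2$, and the only arc into $2$ is from $1$. Hence every walk from $n-1$ to $3$ has the shape $n-1\to n\to 1\to(\text{closed walk at }1)\to1\to2\to3$, so its length equals $4+c$ for some $c\in S$. As $4\notin S$, there is no walk of length $8$ from $n-1$ to $3$, i.e.\ $(A^8)_{n-1,3}=0$; thus $A^8$ is not positive and $exp(ADF_n)\ge 9$.

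For the upper bound I would show directly that every ordered pair $(u,v)$ is joined by a walk of length exactly $9$. Two facts are needed: (i) $ADF_n$ has diameter at most $4$, since $d(u,1)\le2$ for $u\ne1$ (odd vertices reach $1$ in one step, and $2i\to2i+1\to1$, $2k\to n\to1$ handle the even ones) and $d(1,v)\le2$ for $v\ne1$ (even vertices in one step, and $1\to2i\to2i+1$ for the odd ones), so $d(u,v)\le4$; and (ii) for $k>2$ every vertex of $ADF_n$ lies on a cycle of length $3$, on a cycle of length $5$, and on a cycle of length $7$. Fact (ii) follows from the cycles $1\to2i\to2i+1\to1$ $(1\le i\le k)$, $1\to2i\to2i+1\to2i+2\to2i+3\to1$ $(1\le i\le k-1)$, and $1\to2i\to\cdots\to2i+5\to1$ $(1\le i\le k-2)$: as $i$ ranges over the admissible values, the vertex sets of each family cover $\{1,2,\ldots,n\}$. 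Granting (i)--(ii): if $u\ne v$ set $d=d(u,v)\in\{1,2,3,4\}$; then $9-d\in\{5,6,7,8\}$, and since $6=3+3$ and $8=3+5$, concatenating appropriate short cycles through $u$ produces a closed walk at $u$ of length $9-d$, which followed by a shortest $u\to v$ walk is a $u\to v$ walk of length $9$. If $u=v$, a closed walk of length $9=3+3+3$ at $u$ works. Hence $A^9>0$, and with the lower bound $exp(ADF_n)=9$.

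The step needing the most care, and the one where the hypothesis $k>2$ is genuinely used, is handling the distance-$2$ pairs such as $(1,2i+1)$: there $9-d=7$, which is \emph{not} a sum of $3$'s and $5$'s, so one really must invoke a $7$-cycle through the relevant vertex, and such cycles exist only once $n\ge 7$. (For $n=5$ the same analysis of the pair $(n-1,3)$ shows the exponent is much larger than $9$, which is why the restriction is necessary.) I expect the cycle-covering check in fact (ii) and the book-keeping of forced first/last steps in the lower bound to be the only points that are not entirely routine.
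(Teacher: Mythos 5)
Your proof is correct, and while it shares the paper's skeleton (exponent $\geq 9$ via the absence of a length-$8$ walk from $n-1$ to $3$; exponent $\leq 9$ via length-$9$ walks between all pairs), the execution is genuinely different. The paper simply \emph{asserts} that no walk of length $8$ runs from $n-1$ to $3$, whereas you actually prove it: the forced prefix $n-1\to n\to 1$ and forced suffix $1\to 2\to 3$, together with the observation that every closed walk at vertex $1$ has length in the numerical semigroup generated by the odd cycle lengths $3,5,\ldots,2k+1$ (which omits $4$), is a clean and complete argument, and it also explains at a glance why $ADF_5$ behaves differently. For the upper bound the paper writes down an explicit length-$9$ walk for each parity class of ordered pairs $(u,v)$; you replace this bookkeeping by the structural facts that the diameter is at most $4$ and that every vertex lies on cycles of lengths $3$, $5$ and $7$ (the last requiring $k>2$), then pad a shortest $u$--$v$ walk by a closed walk of length $9-d\in\{5,6,7,8\}$, each realizable from $\{3,5,7\}$. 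Your version is less error-prone than the paper's list (whose displayed walks such as $(1,2,3,4,5,6,7,1,i-1,i)$ also implicitly need $n\geq 7$), isolates exactly where the hypothesis $k>2$ enters, and, via the explicit monotonicity remark that $A^m>0$ implies $A^{m+1}>0$, makes precise why checking only the powers $8$ and $9$ suffices -- a point the paper leaves tacit. The only (cosmetic) quibble is that you invoke strong connectivity before establishing it; it follows from your own fact (i), so you should state that fact first.
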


\begin{proof} First observe that there is no walk of length of $8$ from vertex
$n-1$ to 3.
 Hence if we show that there is a walk of length $9$ between any two vertices of
$DF_n$, then the result follows from the Theorem~\ref{thm:walk}.
Note that  all additions here are done  under modulo $n$, whenever
sum exceeds n. Also observe that $ADF_n$ has at least $7$ vertices
as $k>2$. A walk of length $9$ are given as follows.
\begin{itemize}
\item 1 to 1 is $(1,2,3,1,2,3,1,2,3,1)$.
\item 1 to i, where $i$ is even is $(1,2,3,1,2,3,4,5,1,i)$.
\item 1 to i, where $i$ is odd is $(1,2,3,4,5,6,7,1,i-1,i)$.
\item i to 1, where $i$ is even is $(i,i\pm 1,1,2,3,4,5,6,7,1)$.
\item i to 1, where $i$ is odd is  $(i,1,2,3,4,5,1,i-1,i,1)$.
\item i to j, where $i,j$ are even is $(i,i\pm 1,1,i,i\pm 1,1,i,i+1,1,j).$
\item i to j, where $i,j$ are odd is $(i,1,2,3,1,2,3,1,j-1,j).$
\item i to j, where $i$ is even and $j$ is odd is $(i,i\pm 1,1,2,3,4,5,1,j-1,j)$.
\item i to j, where $i$ is odd and $j$ is even is $(i,1,2,3,4,5,6,7,1,j)$.
\end{itemize}
\end{proof}

By definition $ADF_n$ is reducible, whenever $n$ is even, as the
last row of the adjacency matrix of $ADF_n$ is the zero row, whereas
$ADF_3$ is a directed cycle. This is an example of a digraph with
non primitive irreducible adjacency matrix and  $exp(ADF_5)=12$.

The following table gives the exponents of some of the digraphs.\\

{\tiny{\begin{tabular}{|l|l|l|l|l|l|l|l|l|l|}
 \hline
$X$ & $ADF_{2k+1}$ & $PDF_n$ & $kDF_{2k}$ & $kDF_{2k+1}$ & $HDF_n$ &
$ADW_{2k+1}$ & $ADW_{2k}$ & $kDW_{2k}$ & $kDW_{2k+1}$  \\
\hline
$exp(X)$ & 9 for $n\ge 6$ & n  &  k+4 & k+5 & n+1 & 6 & 7 & 2k+3 & 2k+4 \\
\hline
no walk of &  &   &   &  &  &  &  &  &  \\
length  & n-1 to 3 & n-1 to 2   & k+1 to 2  & k+1 to 2  & 2 to n &n-2 to 2  &n-3
to 2  &k+1 to k+2  & k+1 to k+2 \\
$exp(X)-1$ from  &  &   &   &  &  &  &  &  &  \\
\hline
\end{tabular}}}
\vspace{0.7cm}

\end{document}